\numberwithin{equation}{section}
\theoremstyle{plain}
\newtheorem{theorem}[subsection]{Theorem}
\newtheorem{proposition}[subsection]{Proposition}
\newtheorem{lemma}[subsection]{Lemma}
\newtheorem{corollary}[subsection]{Corollary}
\newtheorem{conjecture}[subsection]{Conjecture}
\theoremstyle{definition}
\newtheorem{definition}[subsection]{Definition}
\newcommand\cN{{\mathcal N}}
\newcommand{\beq}{\begin{equation*}}
\newcommand{\eeq}{\end{equation*}}
\newcommand{\cO}{{\mathcal O}}
\newcommand{\cA}{{\mathcal A}}
\def\ox{\otimes}
\def\o+{\oplus}
\def\ua{\uparrow}
\def\Lra{\Longrightarrow}
\def\beqa{\begin{eqnarray}}
\def\eeqa{\end{eqnarray}}
\newcommand{\al}{\alpha}
\newcommand{\la}{\lambda}
\newcommand{\si}{\sigma}
\newcommand{\om}{\omega}
\begin{document}

\title[]{{On the Existence of Stable bundles with prescribed Chern classes on Calabi-Yau threefolds}}
\author{Bj\"orn Andreas}
\address{Institut f\"ur Mathematik, Freie Universit\"at Berlin, Arnimallee 3, 14195 Berlin.}
\email{andreasb@mi.fu-berlin.de}
\thanks{B. A. is supported by DFG SFB 647: Space-Time-Matter. Analytic and Geometric Structures.}
\author{Gottfried Curio}
\address{Arnold-Sommerfeld-Center 
for Theoretical Physics, Department f\"ur Physik, 
Ludwig-Maximilians-Universit\"at M\"unchen, Theresienstr. 37, 80333 M\"unchen.}
\email{Gottfried.Curio@physik.uni-muenchen.de}
\thanks{G. C. is supported by DFG grant CU 191/1-1; ASC Report-Nr. 
LMU-ASC 09/11}

\begin{abstract}
We prove a case of the conjecture of Douglas, Reinbacher and Yau about the 
existence of stable vector bundles with prescribed Chern classes on a Calabi-Yau threefold.
For this purpose we prove the existence of certain stable vector bundle extensions
over elliptically fibered Calabi-Yau threefolds.
\end{abstract}

%\subjclass{52C10, 32S22}

\maketitle
%\today

\section{Introduction}
The present note is concerned with the question of existence 
of stable bundles $V$ with prescribed Chern class $c_2(V)$ 
on a given Calabi-Yau threefold $X$.
The 'DRY'-conjecture of Douglas, Reinbacher and Yau in [\ref{DRY}] 
gives a sufficient condition for cohomology classes on
$X$ to be equal to
the Chern classes of a stable sheaf $V$. In this note we consider the case that $V$ is actually a vector bundle and $c_1(V)=0$.
In [\ref{DRYAC}] we showed
that infinitely many classes on an $X$ which is elliptically fibered over a base
surface $B$ exist for which the conjecture is true. 
A weak form of the conjecture (considered already 
in [\ref{DRYAC}])
asserts the existence of $V$ with a suitable prescribed second Chern class 
$c$.
In [\ref{DRY2}] we showed that rank $4$ {\em polystable} vector bundles 
exist for such suitable cohomology classes $c$ ("DRY-classes", cf. the definition below). 
In this note we prove the weak form of the conjecture
by providing corresponding {\em stable} bundles $V$, for all
ranks $\cN\geq 4$, for $B$ a Hirzebruch surface ${\bf F_g}$ or a del Pezzo surface ${\bf dP_k}$, in all but finitely many cases.

We recall the following definition (for use in this note) and the weak DRY-conjecture
\begin{definition}
 Let $X$ be a Calabi-Yau threefold of $\pi_1(X)=0$ and
$c\in H^4(X, {\bf Z})$, 
\begin{enumerate}
\item $c$ is called a {\em Chern class} 
if a stable $SU({\cN})$ vector bundle $V$ on $X$ exists with $c=c_2(V)$,
\item  $c$ is called a {\em DRY class} if an ample class 
$H\in H^2(X, {\bf R})$ exists (and an integer ${\cN}$) with 
\beqa
\label{weak dry}
c&=&{\cN}\Bigg( H^2+\frac{c_2(X)}{24}\Bigg).
\eeqa
\end{enumerate}
\end{definition}

\begin{conjecture} On a Calabi-Yau threefold $X$ with $\pi_1(X)=0$ 
every DRY class $c\in H^4(X, {\bf Z})$ is a Chern class.
\end{conjecture}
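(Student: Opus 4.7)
The plan is to reduce the full conjecture to the case of elliptically fibered Calabi-Yau threefolds and then to transport the result to a general $X$ by deformation. First I would establish the conjecture for every simply-connected elliptic Calabi-Yau threefold $X\to B$ admitting a section, by producing stable $SU(\cN)$ bundles with $c_2=\cN(H^2+c_2(X)/24)$ via the spectral cover construction of Friedman-Morgan-Witten. Given a DRY class $c$ and an ample $H$, one writes $c$ in the basis of $H^4(X,\Z)$ induced by the fibration, reads off an effective spectral cover class $\cN\si+\pi^*\eta$ on $X$, and chooses a twisting line bundle so that the Fourier--Mukai transform of the spectral data has Chern classes matching $c$. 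Irreducibility of the cover and genericity of the twist would yield slope stability for a suitable polarization, extending the arguments of the present paper from $\mathbf{F_g}$ and $\mathbf{dP_k}$ to an arbitrary simply-connected base $B$.

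Second, for a general simply-connected Calabi-Yau threefold $X$, I would argue by deformation. The goal is to exhibit a flat family $p:\mathcal{X}\to S$ over a connected base $S$, with one fiber $X_{s_0}$ elliptic and another fiber $X_{s_1}=X$, to construct the required stable bundle $V_0$ on $X_{s_0}$ in the class corresponding to $c$ under parallel transport in $R^4p_*\Z$, and to deform $V_0$ along $S$. Openness of stability, together with control of obstructions in $H^2(X_{s_0},\operatorname{End}_0 V_0)$ via the relative moduli space of simple sheaves on $\mathcal{X}/S$, would permit extending $V_0$ over the whole family; the second Chern class is locally constant in $H^4(X_s,\Z)$, so $c_2(V_{s_1})=c$ as required.

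The hard part will be the connectedness hypothesis on $S$ in the second step. The moduli space of Calabi-Yau threefolds is known to break into many irreducible components, and there is no general mechanism to join an arbitrary simply-connected Calabi-Yau threefold to an elliptic one through a family of (possibly singular) Calabi-Yau varieties in a way that allows both the bundle and the class $c$ to be tracked with control of the ampleness of $H$ and the integrality of $c$; the parallel-transport identification of $H^4$-classes also degenerates along non-smooth loci. Overcoming this obstacle would seem to require either a new intrinsic construction of stable bundles with prescribed Chern classes on arbitrary Calabi-Yau threefolds --- perhaps using Bridgeland stability conditions and wall-crossing to reduce to sheaves of a standard form accessible to direct analysis --- or substantially deeper results on the structure of the Calabi-Yau moduli space compatible with the transport of bundle data. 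It is precisely the absence of such a tool that explains why the present paper, and its predecessors, can handle only the elliptic case.
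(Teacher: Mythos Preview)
The statement you are attempting to prove is labeled a \emph{Conjecture} in the paper, and the paper does not prove it; it proves only the partial result Theorem~\ref{main}, valid for elliptically fibered $X$ over $B={\bf F_g}$ or ${\bf dP_k}$, for $\cN\geq 4$ (resp.\ $\cN\geq 6$), and in general only up to finitely many exceptional classes. So there is no ``paper's own proof'' to compare against; what you have written is a programme, not a proof, and you acknowledge as much in your final paragraph.

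Even restricted to the elliptic case, your first step diverges from what the paper actually does and has real gaps. You propose to realise every DRY class as $c_2$ of a pure spectral cover bundle $W$ by matching the class directly via a twisting line bundle. But the $H^4(B,\Z)$-component of $c_2(W)$ for fixed $\eta$ and minimal $|\lambda|$ is bounded below independently of the DRY bound, and for small $\eta$ this bound need not lie beneath the DRY socle; that is precisely why the paper introduces the extension
\[
0\to \pi^*E\otimes\cO_X(-nD)\to V\to W\otimes\cO_X(rD)\to 0
\]
with an auxiliary stable bundle $E$ on $B$ and a twist by $D=\pi^*\alpha$, and then uses the freedom in $c_2(E)$ and $\alpha^2$ to push the $\omega$-component down. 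Your plan omits this mechanism entirely. It also asserts stability ``for a suitable polarization'' by genericity, whereas the paper needs a delicate argument (Theorem~2.1) balancing the nonsplit condition, the orthogonality $\alpha H_B=0$, and the sign of $DH_0^2$; extending this to an arbitrary simply-connected base $B$ is not automatic, since the very existence of a nonzero $\alpha$ orthogonal to an ample class requires $h^{1,1}(B)\geq 2$, and the explicit inequalities in \S3 use the concrete intersection forms of ${\bf F_g}$ and ${\bf dP_k}$.

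Your second step, deforming a general simply-connected Calabi-Yau threefold to an elliptic one while transporting both the bundle and the integral class $c$, is the genuine obstruction, and you correctly identify it as such. Connectedness of the relevant moduli (even through mildly singular fibres), unobstructedness of the bundle deformation, and preservation of ampleness of $H$ under parallel transport are all open in this generality. Absent a new idea here, what you have is a restatement of why the conjecture remains open, not a proof.
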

Here it is understood that the integer ${\cN}$ occurring in the two
definitions is the same.

In this paper we will construct a class of stable bundle extensions on an elliptically fibered Calabi-Yau threefold $\pi\colon X\to B$ with section $\sigma$ (we also denote by $\sigma$ the image divisor in $X$ and its cohomology class). We will consider $B$ to be a surface with ample $K_B^{-1}$ such as the Hirzebruch surface ${\bf F_g}$ with $g=0,1$ or the del Pezzo surface $\bf dP_k$ with $k=0,\dots, 8$ (note that ${\bf dP_1}\cong {\bf F_1}$).

The main result of this note is (using the decomposition $H^4(X,{\bf Z})\cong H^2(B,{\bf Z})\sigma\oplus H^4(B,{\bf Z})$; we usually identify $H^4(B,{\bf Z})$ with ${\bf Z}$).
\begin{theorem}\label{main}
Let the class $c=\phi \sigma+\omega$ 
be a DRY class. Then $c$ is a Chern class
\begin{enumerate}
\item up to finitely many exceptions in $c$
\begin{enumerate}
\item for $\cN\geq 4$ and $B={\bf F_0}$ ,
\item for $\cN\geq 6$ and $B={\bf dP_k}$ for $k=1,\dots, 8$
\end{enumerate}
\item without any exceptions in $c$
\begin{enumerate}
\item for $\cN\equiv 2 \; ({\rm mod}\: 4)$ with $\cN\neq 2$ and either $B={\bf F_g}$ or $B={\bf dP_k}$ with $k=1,\dots, 6$,
\item for $\cN\equiv 0 \; ({\rm mod}\: 4)$ and $B={\bf F_0}$.
\end{enumerate}
\end{enumerate}
\end{theorem}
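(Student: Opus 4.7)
The strategy is to construct the desired stable $SU(\cN)$ bundle $V$ as a non-trivial extension
\[ 0 \longrightarrow V_1 \longrightarrow V \longrightarrow V_2 \longrightarrow 0, \]
of two stable bundles of lower rank built via the Friedman--Morgan--Witten relative spectral cover construction on $\pi\colon X\to B$. I would take $c_1(V_1)=-c_1(V_2)$ so that $c_1(V)=0$, whence $c_2(V)=c_2(V_1)+c_2(V_2)+c_1(V_1)\cdot c_1(V_2)$. The rank split is dictated by the parity hypotheses: for part (ii) the natural choice is $\mathrm{rk}(V_1)=\mathrm{rk}(V_2)=\cN/2$ (with a further rank-two splitting of each summand when $\cN\equiv 0 \; ({\rm mod}\: 4)$ on ${\bf F_0}$), while for part (i) the asymmetric split $\mathrm{rk}(V_1)=2$, $\mathrm{rk}(V_2)=\cN-2$ is convenient, with the rank 4 polystable bundle from [\ref{DRY2}] serving as the seed case that is upgraded from polystable to stable by turning on a non-zero extension class.

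The first sub-step is to translate the DRY condition for $c=\phi\sigma+\omega$ into admissible numerical data (a spectral cover class $\eta_i\in H^2(B,{\bf Z})$ and a fibre-degree parameter for each $V_i$), showing that the resulting system of Diophantine compatibility conditions can be solved in all but finitely many cases, and identically when the parity of $\cN$ falls in the prescribed range; this is where the finite exceptional set in part (i) and its absence in part (ii) first enter. The second sub-step is to establish $\mathrm{Ext}^1(V_2,V_1)\neq 0$: Hirzebruch--Riemann--Roch on $X$ computes $\chi(V_2^\vee\otimes V_1)$, Serre duality on the Calabi--Yau identifies $\mathrm{Ext}^3(V_2,V_1)\cong \mathrm{Hom}(V_1,V_2)^{\vee}$, and the distinct spectral data together with the stability of the $V_i$ kill both $\mathrm{Hom}(V_2,V_1)$ and $\mathrm{Hom}(V_1,V_2)$; the sign of $\chi$ --- controlled by the $\omega$-term in the DRY class --- then forces non-vanishing of $\mathrm{Ext}^1$.

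The third and most delicate sub-step is to prove that a generic such extension $V$ is slope-stable with respect to a suitable polarization of the form $H_t=\sigma+t\pi^*H_B$ with $t\gg 0$. Any putative destabilizing sub-sheaf $F\subset V$ fits into an exact sequence $0\to F\cap V_1\to F\to F/(F\cap V_1)\to 0$ with $F\cap V_1\subset V_1$ a subsheaf and $F/(F\cap V_1)\subset V_2$; stability of $V_1$ and $V_2$ bounds the slopes of the two outer terms, and one uses the extension class to rule out the finitely many intermediate numerical configurations that remain. This stability step is the main obstacle: controlling intermediate-rank destabilizers that project non-trivially onto both $V_1$ and $V_2$ is precisely where the lower bounds $\cN\geq 4$ on ${\bf F_0}$ and $\cN\geq 6$ on ${\bf dP_k}$ enter, as these bounds are needed to leave enough room in the slope inequalities; the parity hypotheses in part (ii) correspond exactly to those ranges in which the divisibility makes every compatibility automatic, so no numerical exceptions survive.
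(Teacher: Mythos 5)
Your overall architecture (a nonsplit extension of two stable pieces, $\mathrm{Ext}^1\neq 0$ via Riemann--Roch plus vanishing, stability in the adiabatic regime) does match the paper's, but two of your key choices create genuine gaps. First, you build \emph{both} $V_1$ and $V_2$ by the spectral construction. The paper instead takes the sub-bundle to be $\pi^*E\otimes\cO_X(-nD)$ with $E$ a stable bundle on $B$, $c_1(E)=0$ and $c_2(E)$ an \emph{arbitrary} integer $\geq r+2$ (Artamkin), and only the quotient a spectral bundle $W\otimes\cO_X(rD)$. The free parameter $c_2(E)$ is what lets $c_2(V)$ sweep out every integer value of the $H^4(B)$-component $\omega$ above an $\eta$-independent threshold $\omega_V$; a spectral bundle's fibre component of $c_2$ is rigidly determined by $(\eta,\lambda,n)$, so with two spectral pieces your ``Diophantine compatibility'' step --- hitting every $\omega$ allowed by DRY --- is exactly the unproved crux, and it is not clear it can be solved at all, let alone with only finitely many exceptions. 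Moreover, the finitely-many-exceptions statement in (i) arises in the paper from comparing the $\phi$-dependent DRY lower bound $\omega_{DRY}$ with the $\phi$-independent bound $\omega_V$, and the ``no exceptions'' cases in (ii) come from switching to the less-twisted extension (\ref{extv2}) with $n=r$, which shrinks the $-\frac{nr(n+r)}{2}\alpha^2$ contribution to $-\alpha^2$ so that an explicit inequality (e.g. $\frac{11}{6}m>\frac{m}{2}+g-4$) shows the DRY bound always dominates; your outline contains no mechanism producing this dichotomy.

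Second, you attribute the rank and base restrictions to the stability step (``room in the slope inequalities''), but in the paper the stability proof is uniform in $\cN$ and $B$; the restrictions come from integrality of the spectral line bundle together with $r\geq n$. For $c_1(W)=0$, integrality of $c_1(L)$ forces: $n$ odd $\Rightarrow$ $\lambda$ strictly half-integral (no constraint on $\eta$), while $n$ even forces either $\eta\equiv c_1 \ ({\rm mod}\ 2)$ (incompatible with prescribing an arbitrary $\phi$) or $c_1$ even, i.e.\ $B={\bf F_0}$; combined with $r\geq n$ (needed so that effectivity of $\eta-nc_1$ follows from the DRY property of $\phi-\frac{\cN}{2}c_1$), this is exactly what yields $\cN\geq 4$ on ${\bf F_0}$, $\cN\geq 6$ on ${\bf dP_k}$, and the parity conditions in (ii). Your rank-$2$ piece for part (i) runs into precisely this even-rank obstruction on ${\bf dP_k}$, and ``upgrading'' the rank-$4$ polystable bundle of [\ref{DRY2}] by turning on an extension class is asserted, not argued. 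Finally, your stability sketch omits the two ingredients the paper actually needs: the twist by $D=\pi^*\alpha$ with $\alpha H_B=0$ and $\pi^*\alpha H_0^2>0$ (whose unsolvability on ${\bf P^2}$ is why ${\bf dP_0}$ is excluded from the theorem), and the injectivity-of-$\mathrm{Ext}^1$ argument ([\ref{AC1}], Lemma 2.3) that handles subsheaves mapping onto $G_n\subset W$ with quotient supported in codimension $\geq 2$, where slope comparisons alone cannot decide; genericity of the extension class, as you invoke it, does not by itself dispose of that case.
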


This note has two parts. In section 2 we prove the existence of a stable
bundle extensions under certain conditions (equations (\ref{non-split condition})-(\ref{stab})) on the input data.

In section 3 we apply this to the weak DRY conjecture
by showing that, for a class of cases
described precisely below, a DRY class fulfills the assumptions of section 2. 

\section{Stable Bundle Extensions}
In this section we will construct a class of stable bundle extensions which will later, in section 3,
serve as the bundles which realize a given DRY class as Chern class. 

We consider the following extension
\beqa\label{extv}
0\to \pi^*E\otimes {\cO}_X(-nD)\to V\to W\otimes {\cO}_X(rD)\to 0
\eeqa
where $E$ is a stable rank $r$ vector bundle on $B$ with Chern classes $c_1(E)=0$; 
these bundles exist on rational surfaces if $c_2(E)\geq r+2$ [\ref{Art}];
$D=\pi^*\alpha$ is a divisor in $X$ with $\alpha$ a divisor in $B$ and $W$ a rank $n$ spectral cover bundle with $c_1(W)=0$. 
Let $C$ be an irreducible surface in the linear system $|n\sigma+\pi^*\eta|$ (where we denote by $\eta$ 
a divisor class in $B$ and likewise its cohomology class) and $i\colon C\to X$ the immersion of $C$ into $X$ and let $L$ be a rank one sheaf on $C$. We say $W$ is a spectral cover bundle [\ref{FMW}] of rank $n$ if $W=\pi_{1*}(\pi_2^*(i_*L)\otimes \mathcal{P})$ where $\mathcal{P}$ is the Poincar\'e sheaf on the fiber product $X\times_B X$ and $\pi_{1,2}$ are the respective projections on the first and second factor. The condition 
$c_1(W)=0$ leads to (here and in the sequel $c_1$ denotes $c_1(B)$) (cf. [\ref{FMW}])
\begin{equation}
c_1(L)=n\Big(\frac{1}{2}+\lambda\Big)\sigma+\Big(\frac{1}{2}-\lambda\Big)\pi^*\eta+\Big(\frac{1}{2}+n\lambda\Big)\pi^*c_1.
\end{equation}
Since $c_1(L)$ must be an integer class it follows that: if $n$ is odd, then $\lambda$ is strictly half-integral and for $n$ even an integral $\lambda$ requires
$\eta\equiv c_1 \  ({\rm mod}\  2)$ while a strictly
half-integral $\lambda$ requires $c_1$ even. Moreover note that, to assure that the linear system $|n\sigma+\pi^*\eta|$ contains an irreducible surface $C$ it is sufficient to demand that the linear system $|\eta|$ is base-point free in $B$ and that the divisor corresponding to the cohomology class $\eta-nc_1$ is effective [\ref{FMW}].

Now let $H_0$ and $H_B$ be fixed ample divisors in $X$ and $B$, respectively. We have that 
$\pi^*E$ and $W$ are stable with respect to $H=\epsilon H_0+\pi^*H_B$ for $\epsilon>0$ chosen 
sufficiently small. For $W$ this is due to Theorem 7.2. in [\ref{FMW3}] and for $\pi^*E$ Theorem 3.1 in [\ref{AGF2}].

Two necessary conditions for $V$ to be stable are:
\begin{enumerate}
\item $DH^2>0$, 
\item ${Ext}^1(W\otimes {\cO}_X(rD), \pi^*E\otimes {\cO}_X(-nD))\neq 0$.
\end{enumerate}
Here the first condition assures that $\pi^*E\ox\cO_X(-nD)$ is not a destabilizing subbundle of $V$
and the second condition (which is equivalent to $H^1(X, \pi^*E\otimes W^*\otimes \cO_X(-mD))\neq 0$ 
with $m=r+n=\cN$) assures that a non-split extension exists and shows that $W\otimes \cO(X(rD))$ is not a
destabilizing subbundle $V$. 

\begin{theorem}
Assume (i) and (ii) are satisfied and $\alpha\cdot H_B=0$. Then $V$ as defined in (2.1)
is stable with respect to $H=\epsilon H_0+\pi^*H_B$ for sufficiently small $\epsilon>0$.
\end{theorem}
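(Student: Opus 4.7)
The plan is a contradiction argument. Assume $V$ is not $H$-stable, so there is a saturated subsheaf $F\subset V$ with $\mu_H(F)\ge\mu_H(V)=0$. Using the defining extension (\ref{extv}), set $F_1:=F\cap(\pi^*E\otimes\cO_X(-nD))$ and let $F_2$ be the image of $F$ in $W\otimes\cO_X(rD)$, of ranks $r_1\le r$ and $r_2\le n$. The two key inputs are the $H$-stability of $\pi^*E\otimes\cO_X(-nD)$ (Theorem~3.1 of [\ref{AGF2}], valid for $\epsilon$ small since $E$ is $H_B$-stable on $B$) and of $W\otimes\cO_X(rD)$ (Theorem~7.2 of [\ref{FMW3}], again for small $\epsilon$). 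These supply $\mu_H(F_1)<-nD\cdot H^2$ whenever $F_1$ is proper nonzero in $\pi^*E\otimes\cO_X(-nD)$, and $\mu_H(F_2)<rD\cdot H^2$ whenever $F_2$ is proper nonzero in $W\otimes\cO_X(rD)$, with equality only if the saturation of $F_i$ equals the ambient piece.

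I then exhaust the rank profiles. If $r_2=0$, then $F\subseteq\pi^*E\otimes\cO_X(-nD)$ and (i) gives $\mu_H(F)\le-nD\cdot H^2<0$. If $r_1=0$ and $F_2$ saturates to all of $W\otimes\cO_X(rD)$, the inclusion $F\hookrightarrow V$ splits (\ref{extv}), contradicting (ii). If $F_1$ saturates to $\pi^*E\otimes\cO_X(-nD)$ with $F_2$ a proper subsheaf, combining the slope contributions and using strictness on $F_2$ yields $c_1(F)\cdot H^2<r(r_2-n)\,D\cdot H^2<0$. In the remaining mixed configurations the combined bound only reads
\[ c_1(F)\cdot H^2 \;<\; (r_2 r - r_1 n)\,D\cdot H^2 ,\]
so destabilization $c_1(F)\cdot H^2\ge 0$ only forces $r_2 r>r_1 n$, which is not yet a contradiction.

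The hypothesis $\alpha\cdot H_B=0$ closes the remaining cases: it gives $\pi^*\alpha\cdot\pi^*H_B=0$ in $H^4(X)$, so expanding $H^2=(\pi^*H_B)^2+2\epsilon\,H_0\cdot\pi^*H_B+\epsilon^2 H_0^2$ yields $D\cdot H^2=\epsilon^2\,\pi^*\alpha\cdot H_0^2$, positive of exact order $\epsilon^2$ by (i). Every slope bound in the mixed case thereby collapses to $O(\epsilon^2)$, so destabilization forces the leading-order pieces $c_1(F_i)\cdot(\pi^*H_B)^2$ and $c_1(F_i)\cdot H_0\cdot\pi^*H_B$ to match and vanish; combining this vanishing with the strict inequalities from the $H$-stabilities of $\pi^*E$ and $W$, which persist uniformly for small $\epsilon$, then rules out $c_1(F)\cdot H^2\ge 0$ at the remaining $\epsilon^2$ order. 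The principal obstacle I anticipate is precisely this uniform-in-$\epsilon$ asymptotic analysis: one must confirm that the strictness from the two stability inputs cannot be promoted to equality in the $\epsilon\to 0$ limit, so that the mixed subsheaves are excluded and only the two elementary candidates already handled by (i) and (ii) remain.
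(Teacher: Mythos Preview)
Your case analysis tracks the paper's closely, but there is a genuine gap in the case $r_1=0$, $r_2=n$. You write that if $F_2$ saturates to all of $W\otimes\cO_X(rD)$ then the inclusion $F\hookrightarrow V$ splits the extension. That inference is only valid when $F_2=W\otimes\cO_X(rD)$ on the nose. Saturation merely says that $T:=(W\otimes\cO_X(rD))/F_2$ is torsion; when $T\neq 0$ (equivalently $c_1(F_2)=-\bar D+rD$ with $\bar D$ a nonzero effective divisor), the composition $F\cong F_2\hookrightarrow V$ need not produce a splitting, and your contradiction with (ii) does not go through. This is precisely the delicate case the paper isolates: if $\bar D\cdot H^2>0$ one checks directly that $\mu_H(F)<0$ for small $\epsilon$, but if $\bar D\cdot H^2=0$ (so $T$ is supported in codimension $\ge 2$) one needs an additional argument. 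The paper supplies it by showing that
\[
Ext^1\big(T\otimes\cO_X(rD),\,\pi^*E\otimes\cO_X(-nD)\big)=0
\]
via Serre duality and the codimension-$\ge 2$ support of $T$, so the restriction map on $Ext^1$ is injective and the non-splitness (ii) propagates to the pulled-back extension by $F_2$. Without this Ext computation your argument does not exclude such $F$.

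A secondary issue is the vagueness of your $\epsilon^2$-order analysis in the mixed cases. Asserting that ``strictness persists uniformly for small $\epsilon$'' is not enough: you would need uniform control over the $c_1$ of all relevant subsheaves. The paper avoids this by extracting the precise form $c_1(F_s)=-A_1\sigma+\pi^*\lambda$ with $A_1\ge 0$ and $\lambda\cdot H_B<0$ (from restricting to $\sigma$ and to a fibre), and $c_1(G_t)=-A_2\sigma+\pi^*\beta$ with $A_2>0$ for $0<t<n$ (from the spectral-cover stability theorem). These structural facts give explicit $\epsilon$-expansions of $\mu_H(V'_{s+t})$ whose leading negative term is identified case by case. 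You should replace the appeal to abstract uniform strictness with this concrete decomposition.
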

\begin{proof}
To prove stability of $V$ consider the diagram of exact sequences \[ \begin{array}{ccccccccc}
  &  &0 & & 0   &  &0 & \\
   &      & \ua &                   &\ua&             & \ua &      &\\
0 &\to &  \pi^*E/F_s\ox \cO_X(-nD) & \to & V/V'_{s+t}&\to & W/G_t\ox\cO_X(rD)&\to &0\\
  &      & \ua &                   &\ua&                    & \ua &      &\\
0 & \to & \pi^*E\ox \cO_X(-nD)& \stackrel{i}{\to} & V & \stackrel{j}{\to}
 &W \ox\cO_X(rD)& \to & 0\\
  &      & \ua &                   &\ua&                    & \ua &      &\\
0 & \to & F_s\ox \cO_X(-nD)& {\to}& V'_{s+t} & {\to} & G_t\ox\cO_X(rD) & \to & 0\\
 &      & \ua &                   &\ua&             & \ua &      &\\
 &  &0 & & 0   &  &0 & \\ \end{array} \]
with $F_s\ox \cO_X(-nD)=i^{-1} V'_{s+t}$ and  $G_t\ox \cO_X(nD)=j(V'_{s+t})$ of ranks $0\leq s\leq r$ and $0\leq t\leq n$ for a subsheaf $V'_{s+t}$ of $V$. 

Note that $s=0$ or $t=0$ implies $F_s=0$ or $G_t=0$, respectively. Moreover, note for $0<s< r$ we have $c_1(F_s)=-A_1\sigma +\pi^*\lambda$ with $A_1\geq 0$ and $\lambda\cdot H_B<0$. To see this consider a
$F_s$, a subsheaf of $\pi^*E$ where we can assume that $\pi^*E/F_s$ is torsion free.
We have $0\to F_s\vert_{\sigma}\to E$ and $c_1(F_s\vert_{\sigma})H<0$. Similarly we get for restriction to the fiber $F$ that $0\to F_s\vert_{F}\to \cO^r_F$
thus $deg(F_s\vert_{F})\leq 0$ as $\cO^r_F$ is semistable. Thus $A_1\geq 0$ and $\la H_B < 0$.

For spectral cover bundles and $0<t< n$ we have $c_1(G_t)=-A_2\sigma +\pi^*\beta$ with $A_2> 0$ (Theorem 7.2, [\ref{FMW3}]).

We need to show for all subsheaves $V'_{r+s}$ of $V$ with $0\leq s\leq r$ and $0\leq t\leq n$ and $0<s+t<n+r$ that $\mu(V'_{r+s})<0$. We can assume that the quotient $V/V'_{r+s}$ is torsion free thus all subsheaves $V'_{s+t}$ with $s=r$ need not be considered since $\pi^*E/F_r$ is a torsion sheaf thus $0\leq s< r$.

i) For subheaves $V'_{s+t}$ with $0\leq s<r$ and $0<t<n$ the slope 
is given by 
$$(s+t)\mu(V'_{s+t})\leq -H_B^2\sigma +O(\epsilon)<0$$
where the latter inequality holds for $\epsilon$ sufficiently small. If $s=0$ and 
$0<t<n$ we find similarly $t\mu(V'_{0+t})=-H_B^2\sigma +O(\epsilon)$ and $\mu(V'_{0+t})<0$ for 
$\epsilon$ sufficiently small.

ii) For subsheaves $V'_{s+t}$ with $0< s<r$ and $t=0$ we get
$$\mu(V'_{s})= -nDH^2+\mu(F_s)$$
but since $DH^2>0$ by assumption and $\mu(F_s)<0$ by stability of $\pi^*E$ we get $\mu(V'_s)<0$.

So we are left with the cases where $t=n$. In this case $W/G_n$ is a torsion sheaf. Let us write $c_1(W/G_n)={\bar D}$; since $W/G_n$ is a torsion sheaf one has ${\bar D}\cdot H^2\geq 0$ and ${\bar D}\cdot H^2=0$ if and only if $W/G_n$ is supported in codimension $\geq 2$. Moreover, ${\bar D}$ is an effective divisor in $X$.
From $G_n\to W$, we get the map $(\Lambda^nG_n)^{**}\to\Lambda^nW^{**}$  and that moreover $rk(\Lambda^nG_n)=rk(\Lambda^nW)=1$ and $(\Lambda^nG_n)^{**}$ is a reflexive torsion free sheaf of rank one with $c_1(G_n)=c_1((\Lambda^nG_n)^{**})$. Furthermore, we have $(\Lambda^nW)^{**}=L$ with $c_1(L)=0$. Thus $c_1(G_n)=-{\bar D}$ with ${\bar D}$ effective.

iii) For subheaves $V'_{s+t}$ with $0<s<r$ and $t=n$ we get for $\epsilon$ sufficiently small
$$(s+n)\mu(V'_{s+n})\leq\epsilon \big(2H_0\pi^*(\lambda H_B)+ \epsilon H_0^2(\pi^*\lambda +n(r-s)\pi^*\alpha)\big)<0$$
since $\lambda H_B<0$ and $H_0^2\lambda$ is bounded.

iv) For subheaves $V'_{s+n}$ with $s=0$ and $t=n$ we find
$$\mu(V'_{0+n})= r DH^2-{\bar D}H^2$$
For ${\bar D}H^2>0$ we get (with ${\bar D}=y\sigma+\pi^*{\bar\beta}$) 
$$\mu(V'_{0+n})\leq \epsilon\big(-2H_0\pi^*({\bar\beta}H_B)+\epsilon rH_0^2\pi^*\alpha\big)$$
and for $\epsilon$ sufficiently small that $\mu(V'_{0+n})<0.$  

It remains to prove that the case ${\bar D}H^2=0$ cannot possibly happen.
Note that $V$ does not admit a destabilizing subsheaf $V'_{0+n}=G_n\ox \cO_X(rD)$
if the map 
$$f\colon Ext^1(W\otimes \cO_X(rD), \pi^*E\otimes \cO_X(-nD))\to
Ext^1(G_n\otimes \cO_X(rD),\pi^*E\otimes \cO_X(-nD))$$ 
is injective ([\ref{AC1}], Lemma 2.3). To see this in our case, we apply $Hom(\,\,\, ,\pi^*E\otimes \cO_X(-nD))$
to the exact sequence
$$0\to G_n\ox \cO_X(rD)\to W\ox\cO_X(rD)\to W/G_n\ox \cO_X(rD)\to 0$$
to obtain (set ${\mathcal E}=\pi^*E\otimes \cO_X(-nD)$ and $T=W/G_n$)
$$Ext^1(T\otimes\cO_X(rD),{\mathcal E})\to Ext^1(W\otimes \cO_X(rD), {\mathcal E})\to
Ext^1(G_n\otimes \cO_X(rD),{\mathcal E})$$
Since $T$ is supported in codimension $\geq 2$, by duality $Ext^1(T\otimes\cO_X(rD),{\mathcal E})=
Ext^2({\mathcal E}, T\ox\cO_X(rD)\ox\cO_X(K_X))^*=Ext^2({\mathcal E}, T\ox\cO_X(rD))=H^2(X,{\mathcal E}^*\ox T\ox\cO_X(rD))^*=0$ so that $f$ is injective, and this finishes the proof.
\end{proof}
 
Finally let us analyze when the above extension can be chosen non-split. 
To simplify notation set $F=E\otimes \cO_B(-m\alpha)$. The Hirzebruch-Riemann-Roch theorem on $X$ gives
\beqa
I_X=\sum_{i=0}^3(-1)^i \dim H^i(X, \pi^*F\otimes W^*)&=&\int_X ch(\pi^*F\otimes W^*)Td(X)\nonumber\\
&=&r\big(-\lambda\eta+m\alpha\big)(\eta-nc_1).
\eeqa
The following lemma will be helpful for proving the next proposition.
\begin{lemma}
Let $W$ be a spectral cover bundle over $X$ then 
\begin{enumerate}
\item $\pi_*W=\pi_*W^*=0,$
\item the sheaves $R^1p_*W$ and $R^1\pi_*W^*$ are supported on $\cA=C\cap \sigma$.
\end{enumerate}
\end{lemma}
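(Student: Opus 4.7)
My plan is to analyze $W$ and $W^*$ fibrewise using the spectral cover decomposition, and then invoke the relative Fourier--Mukai formalism to upgrade a ``supported on $\cA$'' statement to the actual vanishing of $\pi_*W$ and $\pi_*W^*$.

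First, on a smooth fibre $F_b = \pi^{-1}(b)$ the spectral description of $W$ yields
\[
W\vert_{F_b} \;\cong\; \bigoplus_{p\in C\cap F_b}\cO_{F_b}(p-p_0)\otimes L_p,
\]
where $p_0 = \sigma\cap F_b$ is the marked point and $L_p$ is the one-dimensional fibre of $L$ at $p$. When $b\notin\pi(\cA)$ no $p$ coincides with $p_0$, so each summand is a nontrivial degree-zero line bundle on the elliptic curve $F_b$; hence $H^0(F_b, W\vert_{F_b})=0$, and Serre duality on $F_b$ (using $\omega_{F_b}\cong\cO_{F_b}$) forces $H^1(F_b, W\vert_{F_b})=0$ as well. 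The same argument applied to $W^*\vert_{F_b}\cong\bigoplus \cO_{F_b}(p_0-p)\otimes L_p^{-1}$ handles the dual bundle. Semicontinuity together with the cohomology-and-base-change theorem then makes all four of $R^i\pi_*W$ and $R^i\pi_*W^*$ ($i=0,1$) sheaves supported on $\pi(\cA)\subseteq B$, which under the identification $\pi\vert_\sigma\colon\sigma\xrightarrow{\sim}B$ is precisely $\cA$.

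To promote the $R^0$ vanishing from ``supported on $\cA$'' to identically zero I would apply $R\pi_*$ to the defining formula $W=R\pi_{1*}(\pi_2^*(i_*L)\otimes\mathcal{P})$. Using $\pi\circ\pi_1=\pi\circ\pi_2$ together with the projection formula, this becomes
\[
R\pi_*W \;\cong\; R\pi_*\bigl(i_*L\otimes^{L} R\pi_{2*}\mathcal{P}\bigr).
\]
The standard relative Poincar\'e identity $R\pi_{2*}\mathcal{P}\cong\cO_\sigma[-1]$ for an elliptic fibration with section collapses the right-hand side to a complex concentrated in cohomological degree one whose $\mathcal{H}^1$ is the pushforward to $B$ of $L$ restricted to $\cA$. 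This simultaneously delivers $\pi_*W=0$ and identifies the support of $R^1\pi_*W$ with $\cA$; the same computation with $\mathcal{P}$ replaced by $\mathcal{P}^{-1}$ disposes of $W^*$.

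The main obstacle I foresee is treating the singular or reducible fibres of $\pi$ and the loci where $C$ fails to meet $F_b$ transversally: on these fibres the clean direct-sum decomposition of $W\vert_{F_b}$ breaks down, so either the Fourier--Mukai computation has to be pushed through carefully on the Weierstrass model (where $\mathcal{P}$ is only a torsion-free sheaf), or the exceptional contributions have to be bounded by hand using irreducibility of $C$ and the upper semicontinuity of $h^0$ and $h^1$ on fibres, ensuring no new support is created outside $\cA$.
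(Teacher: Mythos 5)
Your argument is correct in substance, but it reaches the key point---the vanishing of $\pi_*W$ and $\pi_*W^*$ (not merely their support being small)---by a genuinely different and heavier route than the paper. The paper also starts from the fibrewise decomposition of $W|_{F_b}$ into nontrivial degree-zero line bundles on a generic fibre, but then closes the $R^0$ step with a one-line observation you do not use: since $W$ is torsion free, $\pi_*W$ is torsion free on $B$, so its vanishing at the generic point (equivalently, its support lying on the curve $\pi(\cA)$, which your semicontinuity/base-change step already gives) forces $\pi_*W=0$ everywhere; the same applies to $W^*$ because $W^*$ is again a spectral cover bundle. This completely sidesteps the bad fibres you worry about in your last paragraph. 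Your alternative, computing $R\pi_*W\cong R\pi_*\bigl(i_*L\otimes^{L}R\pi_{2*}\mathcal{P}\bigr)$ and invoking $R\pi_{2*}\mathcal{P}\cong\cO_\sigma[-1]$ (up to a line-bundle twist on $\sigma$), does work on the Weierstrass model and has the merit of delivering (i) and (ii) simultaneously, with $R^1\pi_*W$ identified concretely as (a twist of) $L$ restricted to $\cA$; but it silently uses several standard facts you should at least flag: that $\pi_{1*}=R\pi_{1*}$ on $\pi_2^*(i_*L)\otimes\mathcal{P}$ because the support is finite over the first factor, that the underived tensor product in the definition of $W$ may be replaced by the derived one, and that the relevant sheaf $\mathcal{T}or_1(i_*L,\cO_\sigma)$ vanishes (which needs $C\neq\sigma$ and $L$ torsion free on the irreducible surface $C$) so that the complex really is concentrated in degree one. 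So: your proof is acceptable, but the paper's torsion-freeness trick is the cheap way to finish your first, elementary argument, while your Fourier--Mukai computation buys a sharper description of $R^1\pi_*W$ at the cost of the derived-category bookkeeping above.
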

\begin{proof}
(i): For a given spectral cover bundle $W$
one has $\pi_*W=0$. At a generic point $b\in B$ one has the stalk
$(\pi_*W)_b=H^0(F, W|_F)=\bigoplus_{i=1}^nH^0(F,{\cO}_F(q_i-p))$
where $p=\si F$ is the zero element in the group law on the fibre
$F$ over $b\in B$ and $q_i$ are the points at which the spectral cover
of $V_n$ intersects $F$.
Now ${\cO}(q_i-p)$ is generically a non-trivial bundle of degree zero which
over an elliptic curve admits no global sections.
Thus $H^0(F,{\cO}_F(q_i-p))=0$ for all $i$ and so $(\pi_*W)|_b=0$.
However, since $V_n$ is torsion free, $\pi_*W$ is also torsion free.
Thus $(\pi_*W)|_b=0$ for generic $b\in B$ gives $\pi_*W=0$ everywhere. As $W^*$ is again a spectral cover bundle one has also $\pi_*W^*=0$.\\
(ii): For generic points $b\in B$ where $p$ is distinct from the $n$ points $p_i$ we have 
$(R^1\pi_*W)_b=0$. However, at points in $B$ at which $p$ is equal to one of the points $p_i$ 
we find that $(R^1\pi_*W)_b\neq 0$. The locus of such points form a curve $C\cap \sigma$ in $B$. 
\end{proof}
\begin{proposition}
$H^3(X,\pi^*F\otimes W^*)=0.$
\end{proposition}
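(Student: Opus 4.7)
The plan is to compute $H^3(X,\pi^*F\otimes W^*)$ via the Leray spectral sequence for $\pi\colon X\to B$, namely
\[
E_2^{p,q}=H^p\bigl(B,R^q\pi_*(\pi^*F\otimes W^*)\bigr)\;\Longrightarrow\; H^{p+q}(X,\pi^*F\otimes W^*).
\]
Since the fibers of $\pi$ are elliptic curves, $R^q\pi_*=0$ for $q\geq 2$; since $B$ is a surface, $H^p(B,-)=0$ for $p\geq 3$. So among terms with $p+q=3$, only $E_2^{2,1}$ can possibly be nonzero, and it therefore suffices to show $E_2^{2,1}=0$.

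Next I would invoke the projection formula, which is applicable because $F=E\otimes\cO_B(-m\alpha)$ is locally free; this gives
\[
R^q\pi_*(\pi^*F\otimes W^*)\cong F\otimes R^q\pi_*W^*.
\]
In particular $E_2^{2,1}=H^2(B,F\otimes R^1\pi_*W^*)$.

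Finally I would apply the preceding lemma: $R^1\pi_*W^*$ is supported on the curve $\cA=C\cap\sigma\subset B$. Consequently $F\otimes R^1\pi_*W^*$ is also supported on the $1$-dimensional subscheme $\cA$. Any coherent sheaf on $B$ whose support has dimension at most $1$ has vanishing cohomology in degrees $\geq 2$ (either by Grothendieck vanishing applied after pushing forward from a suitable nilpotent thickening of $\cA$, or equivalently by the fact that cohomological dimension is bounded by the dimension of the support). Therefore $H^2(B,F\otimes R^1\pi_*W^*)=0$, hence $E_2^{2,1}=0$ and we conclude $H^3(X,\pi^*F\otimes W^*)=0$.

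I do not expect any substantive obstacle here: the statement is a purely formal consequence of Leray, the projection formula, and the second part of the lemma. The only point requiring even momentary attention is the vanishing of $H^2(B,-)$ for sheaves supported in codimension $\geq 1$, which is standard.
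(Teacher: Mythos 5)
Your proof is correct and follows essentially the same route as the paper: Leray spectral sequence for $\pi$, projection formula, and the vanishing of $H^2(B,-)$ for the sheaf $F\otimes R^1\pi_*W^*$ supported on the curve $\cA=C\cap\sigma$. The only (immaterial) difference is that you kill the $E_2^{3,0}$ term by $\dim B=2$, whereas the paper invokes $\pi_*W^*=0$ from part (i) of the lemma.
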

\begin{proof}
The Leray spectral sequence applied to $\pi\colon X\to B$ 
leads to (using the fact that $\pi_*W^*=0$)
\beqa
H^3(X,\pi^*F\otimes W^*)\cong H^2(B, R^1\pi_*W^*\otimes F)
\eeqa
as $R^1\pi_*W^*$ is a torsion sheaf on $B$ supported on the curve of class $\cA=C\cap \sigma$
we get $H^2(B, R^1\pi_*W^*\otimes F)=0$ and conclude.
\end{proof}
Thus imposing $I_X<0$ one gets a non-split extension (\ref{extv}) as $H^1(X, \pi^*F\otimes W^*)\neq 0$.

In summary, we get the following list of conditions
(where $\alpha \neq 0$ because of (\ref{stab})):
\beqa
\label{non-split condition}
\Big[ \lambda\eta-m\alpha\Big] (\eta-nc_1)&>&0\\
\label{orthogonality condition}
\alpha H_B&=&0\;\;\;\;\;\;\;\;\;\;\;\;\;\;\;\;\;\;\; 
(\Lra \alpha \neq \pm \mbox{effective})\;\;\;\;\;\;\\
\pi^*\alpha H_0^2&>&0\label{stab}
\eeqa
To solve (\ref{stab}) we note
\begin{lemma}
Let $X$ be an elliptically fibered Calabi-Yau threefold with section $\sigma$, then $H_0=x\sigma +\pi^*\rho$ 
is ample if and if $x>0$ and $\rho-xc_1$ is ample in $B$.
\end{lemma}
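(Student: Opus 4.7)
The plan is to apply the Nakai--Moishezon criterion on the threefold $X$, using the key adjunction identity $\sigma|_\sigma = K_B = -c_1$ that holds because $X$ is Calabi--Yau (so the normal bundle of the section is $K_B$), and which gives the relation $\sigma^2 = -\sigma\cdot\pi^*c_1$ in $H^4(X,\Z)$.

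For the necessity direction ($\Rightarrow$), I would restrict $H_0$ to the two natural subvarieties of $X$: a generic elliptic fibre $F$ and the section $\sigma\cong B$. From $\sigma\cdot F=1$ and $\pi^*\rho\cdot F=0$, the restriction $H_0|_F$ has degree $x$ on the elliptic curve $F$, so ampleness of $H_0|_F$ forces $x>0$. Using the adjunction identity, the restriction to $\sigma$ gives $H_0|_\sigma = -xc_1+\rho = \rho-xc_1$ on $B$, which must again be ample.

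For the sufficiency direction ($\Leftarrow$), I would verify the three Nakai--Moishezon conditions $H_0\cdot C>0$ on irreducible curves, $H_0^2\cdot S>0$ on irreducible surfaces, and $H_0^3>0$. I rewrite $H_0=x(\sigma+\pi^*c_1)+\pi^*\mu$ with $\mu:=\rho-xc_1$; both summands are nef, the first being the relative hyperplane class of the Weierstrass embedding of fibres, the second a pullback of an ample class. For a vertical curve $C$ inside a fibre one has $\pi^*\mu\cdot C=0$ but $x(\sigma+\pi^*c_1)\cdot C>0$, as an irreducible fibre meets the section. For a horizontal curve, $\pi_*C$ is an effective class in $B$ and ampleness of $\mu$ yields $\pi^*\mu\cdot C>0$. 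Surfaces split analogously into $\sigma$ itself, vertical divisors $\pi^{-1}(D)$, and horizontal surfaces, each case reduced to an intersection calculation on $B$. The top self-intersection unfolds to $H_0^3=x(x^2c_1^2+3xc_1\mu+3\mu^2)$, which is positive once $x>0$ and $\mu$ is ample.

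The main obstacle is the surface condition: while the curve positivity and the top self-intersection reduce cleanly to one- and zero-dimensional intersection numbers on $B$, one must check that no horizontal irreducible surface $S\subset X$ can give $H_0^2\cdot S=0$. Here I would use that $\pi|_S\colon S\to B$ is generically finite of some positive degree, so the product $H_0^2\cdot S$ reduces, after restriction and push-forward, to a positive linear combination of intersection numbers on $B$ pairing only ample and nef classes against an effective class, hence is strictly positive.
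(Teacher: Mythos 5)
Your proof is correct in substance, but there is nothing in this paper to compare it with line by line: the Lemma is not proved here at all, the text only defers to Appendix A of [\ref{DRYAC}], which establishes the ampleness criterion by a positivity check of essentially the kind you perform. Your necessity direction (restricting to a fibre and to the section, via $\sigma\vert_\sigma=K_B=-c_1$) is fine, and your Nakai--Moishezon computation with $H_0=x(\sigma+\pi^*c_1)+\pi^*\mu$, $\mu=\rho-xc_1$, gives the right numbers: $H_0^2\cdot\pi^{-1}(D)=x(2\rho-xc_1)\cdot D$ (which is exactly the expression appearing in (\ref{condh})) and $H_0^3=x^3c_1^2+3x^2c_1\mu+3x\mu^2$. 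Two hypotheses are used tacitly and should be made explicit, since the statement fails without them: every fibre of $\pi$ must be irreducible (the Weierstrass situation assumed throughout), otherwise a fibre component disjoint from $\sigma$ satisfies $H_0\cdot C=0$ and no class $x\sigma+\pi^*\rho$ is ample; and $c_1=-K_B$ must be nef (here even ample, as $B$ is ${\bf F_0}$, ${\bf F_1}$ or a del Pezzo). The latter is what actually makes $\sigma+\pi^*c_1$ nef; your appeal to a ``relative hyperplane class'' is not quite right as stated (a relatively ample class need not be nef), whereas the restriction computation $(\sigma+\pi^*c_1)\vert_\sigma=0$ together with $\sigma\cdot C\geq 0$ for $C\not\subset\sigma$ and $c_1\cdot\pi_*C\geq 0$ does the job, and also keeps the terms $x^3c_1^2$, $3x^2c_1\mu$ nonnegative. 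With these points added, the horizontal-surface case closes exactly as you indicate: $H_0^2\cdot S\geq(\pi^*\mu)^2\cdot S=(\deg\pi\vert_S)\,\mu^2>0$, the remaining terms being intersections of nef classes with an irreducible surface.
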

The proof of this Lemma is given in appendix A in [\ref{DRYAC}].
Thus condition (\ref{stab}) becomes
\beqa
(2\rho -xc_1)\alpha>0.\label{condh}
\eeqa
In the following, (\ref{condh}) will not be considered as a condition on $\alpha$ but rather
as a condition on $x$ and $\rho$ (for each respective $\alpha$). To show that it is possible to solve this for $x$ and $\rho$
note that it is enough to show that an ample class $h$ exists
with $h \alpha >0$: on the one hand the expression in brackets in (\ref{condh})
$h:=\rho+(\rho-xc_1)$ is such a class, and on the other hand
any $h$ can be written in such a way\footnote{one has
$h=(A+xc_1)+A=2[ A + \frac{x}{2}c_1]$ with the ample class
$A:=\rho - x c_1$; conversely one can, if an ample $h$ is given,
solve this for an ample class $A$ as $\frac{1}{2}h$ will be also ample
and thus also $A=\frac{1}{2}h-\frac{x}{2}c_1$ for $x>0$
sufficiently small (the ample cone is an open set)}.
  Now the existence of a nonzero $\alpha$ which is neither
effective nor anti-effective, cf.~equ.~(2.6), presupposes
that $h^{1,1}(B)\geq 2$. Let $\alpha=\beta - \gamma$
be a decomposition with $\beta$ and $\gamma$ effective,
then it is possible to choose in the open ample cone an element $h$
with $h\beta = 2 $ and $h\gamma =1$.

Finally let us give the expressions for the second Chern classes 
of $V$ and $W$
\beqa
c_2(V)&=&c_2(W)+c_2(\pi^*E)-\frac{rn(r+n)}{2}\pi^*\alpha^2,\\
c_2(W)&=&\pi^*\eta\cdot\sigma-\frac{n^3-n}{24}\pi^*c_1^2+(\lambda^2-\frac{1}{4})\frac{n}{2}\pi^*\eta\cdot(\pi^*\eta-n\pi^*c_1).
\eeqa
We also note that for the special case $n=r$ a simplified version of the construction
is possible where the twisting is as follows
\beqa\label{extv2}
0\to \pi^*E\otimes {\cO}_X(-D)\to V\to W\otimes {\cO}_X(D)\to 0
\eeqa
and the stability proof of $V$ is complete analogous. In this case we get
\beqa 
c_2(V)&=&c_2(W)+c_2(\pi^*E)-\pi^*\alpha^2.
\eeqa

\section{Proof of main result}
The aim of this section is to prove Theorem \ref{main}. For this let us first recall
a result characterizing DRY classes on elliptically fibered Calabi-Yau threefolds
obtained in [\ref{DRYAC}] 

\begin{theorem}
A class $c=\phi\si + \omega \in H^4(X, {\bf Z})$ is a DRY class if and 
only if the following condition is fulfilled
(where $b$ is some $b\in{\bf R}^{>0}$ and 
$\omega \in H^4(B, {\bf Z})\cong {\bf Z}$):\\
$\phi-{\cN}(\frac{1}{2}+b)c_1$ is ample and 
$\frac{1}{\cN}\omega> \omega_0(\phi; b):=R+\frac{c_1^2}{4}(b+\frac{q}{b})$.
\end{theorem}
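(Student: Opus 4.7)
The plan is to parameterize any real ample divisor $H$ on $X$ via the lemma from Section 2 (whose proof is in Appendix A of [\ref{DRYAC}])---so $H = x\sigma + \pi^*\rho$ is ample iff $x > 0$ and $\rho - xc_1$ is ample on $B$---and then to compare $\mathcal{N}(H^2 + c_2(X)/24)$ against $c = \phi\sigma + \omega$ component-wise in the Leray decomposition $H^4(X,\mathbf{Z}) \cong H^2(B,\mathbf{Z})\sigma \oplus H^4(B,\mathbf{Z})$. Using the adjunction identity $\sigma^2 = -\sigma\pi^*c_1$ for the zero-section of a trivial-canonical elliptic fibration, one has $H^2 = \sigma\pi^*(2x\rho - x^2 c_1) + \pi^*(\rho^2)$, and combining with the standard Weierstrass formula $c_2(X) = 12\sigma\pi^*c_1 + \pi^*(c_2(B)+11c_1^2)$ splits the DRY identity into
\begin{align*}
\phi &= \mathcal{N}\bigl(2x\rho - x^2 c_1 + \tfrac{1}{2}c_1\bigr) \text{ in } H^2(B,\mathbf{R}), \\
\omega &= \mathcal{N}\bigl(\rho^2 + \tfrac{1}{24}(c_2(B) + 11c_1^2)\bigr) \text{ in } \mathbf{R}.
\end{align*}

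Setting $b := x^2 > 0$, the first equation uniquely determines $\rho = [\phi + \mathcal{N}(b - \tfrac{1}{2})c_1]/(2\sqrt{b}\,\mathcal{N})$, whence a direct manipulation gives $\rho - \sqrt{b}\,c_1 = [\phi - \mathcal{N}(\tfrac{1}{2}+b)c_1]/(2\sqrt{b}\,\mathcal{N})$. Since scaling by the positive factor $1/(2\sqrt{b}\,\mathcal{N})$ preserves ampleness, the lemma's condition on $\rho - \sqrt{b}\,c_1$ becomes exactly the first condition of the theorem. Substituting $\rho(b)$ into the second equation and expanding $(\phi + \mathcal{N}(b - \tfrac{1}{2})c_1)^2/(4\mathcal{N}^2 b)$ in powers of $b$ puts $\omega/\mathcal{N}$ in the form $R + \tfrac{c_1^2}{4}(b + q/b) = \omega_0(\phi;b)$, with the readable identifications $q = (2\phi - \mathcal{N}c_1)^2/(4\mathcal{N}^2 c_1^2)$ and $R = \phi c_1/(2\mathcal{N}) + (c_2(B) + 5c_1^2)/24$.

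What remains is to reconcile the equality $\omega/\mathcal{N} = \omega_0(\phi;b)$ forced by the DRY decomposition with the strict inequality demanded by the theorem. This should come from continuity and convexity: $\omega_0(\phi;\cdot)$ is strictly convex on $(0,\infty)$ with $\omega_0(\phi;b) \to \infty$ both as $b \to 0^+$ and as $b \to \infty$ (since $c_1^2 > 0$ on the bases in question), while the ampleness region $\{b > 0 : \phi - \mathcal{N}(\tfrac{1}{2}+b)c_1 \text{ ample}\}$ is open, so the image of $\omega_0(\phi;\cdot)$ on that region is a half-open half-infinite interval. The intermediate value theorem then makes ``there exists an allowed $b$ with $\omega/\mathcal{N} > \omega_0(\phi;b)$'' equivalent to ``there exists an allowed $b'$ with $\omega/\mathcal{N} = \omega_0(\phi;b')$'', and the latter is by the preceding coefficient-matching the existence of an ample $H$ on $X$ with $c = \mathcal{N}(H^2 + c_2(X)/24)$. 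The main obstacle will be this open-set and convexity bookkeeping underwriting the strict-inequality formulation, while checking along the way that the real class $\rho$ produced by the first equation indeed lives in $H^2(B,\mathbf{R})$ and meets the lemma's hypotheses with no further constraint.
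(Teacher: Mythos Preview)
The paper does not prove this theorem here; it is quoted from [\ref{DRYAC}]. Your outline recovers the argument correctly: parametrize ample $H=x\sigma+\pi^*\rho$, use $\sigma^2=-\sigma\pi^*c_1$ and the Weierstrass formula for $c_2(X)$, set $b=x^2$, solve the $\sigma$-component for $\rho$, and substitute into the fibre component. The identifications $\rho-\sqrt{b}\,c_1=[\phi-\cN(\tfrac12+b)c_1]/(2\sqrt{b}\,\cN)$ and $\omega/\cN=R+\tfrac{c_1^2}{4}(b+q/b)$ are exactly right; to match your $R=\phi c_1/(2\cN)+(c_2(B)+5c_1^2)/24$ with the paper's $R=\phi c_1/(2\cN)+c_1^2/6+\tfrac12$ you need Noether's formula $c_1^2+c_2(B)=12$ for the rational base, which you should state.

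The one genuine soft spot is your passage from the equality $\omega/\cN=\omega_0(\phi;b)$ to the strict inequality in the theorem. Convexity of $\omega_0$ on $(0,\infty)$ together with openness of the ampleness region is not by itself enough to conclude that the image is a half-infinite interval: an open subset of $(0,\infty)$ could sit entirely on one side of the minimum and be bounded away from $0$, giving a bounded image. What you need (and what actually holds) is that the ampleness region is precisely an interval $(0,b_{\max})$: if $\phi-\cN(\tfrac12+b_0)c_1$ is ample then for any $0<b<b_0$ one has $\phi-\cN(\tfrac12+b)c_1=[\phi-\cN(\tfrac12+b_0)c_1]+\cN(b_0-b)c_1$, a sum of ample classes. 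Since moreover $(\phi-\tfrac{\cN}{2}c_1)^2>0$ (it is ample once the region is nonempty), one has $q>0$ and $\omega_0(\phi;b)\to\infty$ as $b\to 0^+$; in fact $b_{\max}\le\sqrt{q}$, so $\omega_0$ is strictly decreasing on $(0,b_{\max})$. With this, the image is the open half-line $(\omega_0(\phi;b_{\max}),\infty)$, and your IVT equivalence goes through cleanly.
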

Here we use the following abbreviations (cf.~[\ref{DRYAC}]):
$R:=\frac{1}{2{\cN}}\phi c_1+\frac{1}{6}c_1^2+\frac{1}{2}$,
$q:=\frac{(\phi - \frac{{\cN}}{2}c_1)^2}{{\cN}^2c_1^2}$.
Note that under the hypothesis that 
$\phi - \frac{{\cN}}{2}c_1=A+b{\cN}c_1$ with an ample class $A$ 
on $B$ (and with $c_1\neq 0$ effective) one has 
$(\phi - \frac{{\cN}}{2}c_1)^2>b^2{\cN}^2c_1^2$,
thus one has $b<\sqrt{q}$. On the other hand $\om_0$ reaches its minimum
as a function of $b$ for $b=\sqrt{q}$, 
giving that $\om_0(\phi)>R+\frac{c_1^2}{2}\sqrt{q}
=R+\frac{1}{2\cN^2}\sqrt{c_1^2}\sqrt{(\phi-\frac{\cN}{2}c_1)^2}$.

Important for us will be especially the following 
(recall that we assume that $c_1$ is ample)

\begin{corollary}
For a DRY class $c=\phi\si + \omega$ one has that
\begin{enumerate}
\item $\phi-\frac{{\cN}}{2}c_1$ is ample,\\
\item $\om>
\cN\Big(R+\frac{1}{2\cN^2}\sqrt{c_1^2}\sqrt{(\phi-\frac{\cN}{2}c_1)^2}\Big)$.
\end{enumerate}
\end{corollary}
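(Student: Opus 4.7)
The plan is to extract both parts of the Corollary directly from the Theorem together with the inequalities already recorded in the paragraph preceding it.

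For part (i), the Theorem produces, for any DRY class $c$, some real number $b>0$ such that $\phi - \cN(\tfrac{1}{2}+b)c_1$ is ample on $B$. I would then add $b\cN c_1$ to this ample class, yielding the identity
\[
\phi - \tfrac{\cN}{2}c_1 \;=\; \bigl(\phi - \cN(\tfrac{1}{2}+b)c_1\bigr) + b\cN\,c_1,
\]
which exhibits $\phi - \tfrac{\cN}{2}c_1$ as the sum of an ample class and a positive multiple of the ample class $c_1$. Since the ample cone is closed under addition and positive scaling, (i) follows.

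For part (ii), the Theorem supplies the inequality $\omega > \cN\,\omega_0(\phi;b)$ for the particular $b$ furnished by the DRY condition. The right-hand side depends on $b$, but to obtain a bound intrinsic to $\phi$ alone I would minimize $\omega_0(\phi;b) = R + \tfrac{c_1^2}{4}(b + q/b)$ over $b>0$. By the AM--GM inequality the function $b \mapsto b+q/b$ attains its minimum $2\sqrt{q}$ at $b=\sqrt{q}$, so $\omega_0(\phi;b) \geq R + \tfrac{c_1^2}{2}\sqrt{q}$ for every admissible $b$. Substituting the definition of $q$ and rewriting $\sqrt{q}$ as $\sqrt{(\phi - \tfrac{\cN}{2}c_1)^2}/(\cN\sqrt{c_1^2})$ then gives the stated explicit lower bound.

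The proof is therefore essentially an algebraic verification and I do not foresee a genuine obstacle. The only point that deserves a comment is that the ambient quantities $c_1^2$ and $(\phi - \tfrac{\cN}{2}c_1)^2$ should be positive so that $\sqrt{q}$ is real and the AM--GM step is legitimate. Positivity of $c_1^2$ is built into the standing assumption that $c_1$ is ample, while positivity of $(\phi - \tfrac{\cN}{2}c_1)^2$ follows from part (i); indeed the discussion just above the Corollary records the stronger inequality $(\phi - \tfrac{\cN}{2}c_1)^2 > b^2 \cN^2 c_1^2$, from which $b < \sqrt{q}$ is deduced. That refinement is not itself needed for the statement of the Corollary, but it confirms that the minimization argument used for (ii) is carried out in the region where all quantities are well-defined.
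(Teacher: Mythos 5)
Your proposal is correct and follows essentially the same route as the paper: part (i) via the decomposition $\phi-\frac{\cN}{2}c_1=\bigl(\phi-\cN(\frac{1}{2}+b)c_1\bigr)+b\cN c_1$ with $c_1$ ample, and part (ii) via minimizing $\om_0(\phi;b)$ at $b=\sqrt{q}$ (the paper even records the sharper fact $b<\sqrt{q}$, which, as you note, is not needed since strictness already comes from $\om>\cN\om_0$). One small remark: substituting $\sqrt{q}=\sqrt{(\phi-\frac{\cN}{2}c_1)^2}/(\cN\sqrt{c_1^2})$ actually yields the coefficient $\frac{1}{2\cN}$ rather than $\frac{1}{2\cN^2}$, so your computation gives a slightly stronger bound, from which the stated inequality follows a fortiori.
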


Our goal is to show that, for a certain class of cases the conditions 
for a DRY class $c$ imply indeed the
conditions under which the stable extension bundle $V$ of the previous section
exist (the latter will have the prescribed class $c$ as $c_2(V)$).

Before we come to the general discussion of all cases with $\cN\geq 4$ 
we have to mention two special cases.
This is caused by a restriction in the relation between the ranks $n$ and $r$
of the spectral bundle and the pullback bundle, respectively.
From the Corollary we know given a DRY class $c$, that $\eta-\frac{n+r}{2}c_1$ is 
effective (we will put $\eta:=\phi$); on the other hand for the spectral
construction we need that $\eta - n c_1$ is effective. Thus one is lead to 
the condition
\beqa
r\geq n
\eeqa

\subsection{The cases $\cN=4, 5$ and  $B\neq {\bf F_0}$}

In this case one would have to assume that $n$ is even (actually $n=2$)
and so that $\eta\equiv c_1 (mod \; 2)$; thus
one would not be sufficiently general in the choice of $\eta$ to match a 
given $\phi$ in the DRY class $c=\phi \si + \omega$. Thus, if one is forced
to have $n$ even, we can only cover the case $B={\bf F_0}$.
On the other hand 
this will be no problem, of course, if $\cN=n+r\geq 6$ as one can then
choose $n=3$. For $\cN=4,5$, however, we can only cover the case $B={\bf F_0}$.

\subsection{The cases $\cN\geq 6$ or $\cN\geq 4$ and  $B={\bf F_0}$}

Let us therefore assume from now on that we are in the case that one has either 
$\cN\geq 6$, and thus $n$ can be choosen odd (actually $n=3$), 
or that $\cN=4$, and thus $n$ is even (actually $n=2$), but $B= {\bf F_0}$.
In both cases we can then use a strictly half-integral $\la$; we will 
have to choose $\la = \pm 1/2$ to kill the corresponding contribution 
in $c_2(V)$ (which would lift the socle of realisable $\om$). 
We will discuss the nonsplit-condition (\ref{non-split condition})
(where $H_B$ has to be chosen orthogonal to $\al$). 
As $\eta(\eta-nc_1)>0$ we will just choose (for each $\eta$) a suitable $\al$ such that
$\al \cdot (\eta-n c_1)\leq 0$.

{\em The case of $B$ being a Hirzebruch surface ${\bf F_g}$}

Because of our assumption that $c_1$ is ample we have actually $g=0$ or $1$.
Let us take first $\al = (-1,1)$ such that $-\al^2=g+2$ 
and $H_B:=(e,f)=e(1,g+1)$ (which is ample as $f>ge$).
The evaluation $\al \cdot (\eta- n c_1)=(g+1)a-b-gn$ shows that one chooses
this $\al$ if the latter expression is $\leq 0$; if it is $>0$ one chooses just
$-\al$ instead of $\al$.

{\em The case of $B$ being a del Pezzo surface ${\bf dP_k}$}

Let us now also investigate the situation 
for the del Pezzo surfaces ${\bf dP_k}$
with $k=0, \dots, 8$ and $c_1=3l-\sum_k E_k$ 
(where $l$ is of course the class of the pull-back of the line).
On ${\bf P^2}$ the condition (\ref{orthogonality condition})
amounts, with $\al=a l$, to $a=0$, giving a contradiction.
The case of ${\bf dP_1}\cong {\bf F_1}$ is already settled.
Now in the case of $2\leq k\leq 8$
let $\al=kl-3\sum_{i=1}^k E_i$ such that $-\al^2=k(9-k)$ and $H_B=c_1$. Then one finds with $\eta= al+\sum b_iE_i$ that
$\al \cdot (\eta- nc_1)=ka+3\sum b_i$: if this expression is $\leq 0$
the choice of $\al$ was already successfull; otherwise one just takes $-\al$.

\subsection{The realizability of DRY classes by Chern classes of stable bundles}

Let us now look at the conditions under which we can realize a given
DRY class $c=\phi \si + \om$ as $c_2(V)$ for the extension bundle $V$ 
we have constructed. In the $\sigma$ term one just takes $\eta:=\phi$
where under our assumed condition $r\geq n$ the spectral requirement
that $\eta-nc_1$ is effective follows from the DRY porperty that $\phi-
\frac{n+r}{2}c_1$ is effective. Now let us compare the $\omega$ terms, i.e.~the
lower bound for $\om_{DRY}$ which follows from the DRY condition with 
the lower bound of the $\omega$ term in the expression $c_2(V)$ which comes from
a stable bundle.
Here one finds (with $n=3, \la = 1/2$; 
note that we had a different choice for $\al$ 
for ${\bf F_1}$ and ${\bf dP_1}$, respectively)
\beqa
\om_{DRY}&>&\frac{1}{2}\eta c_1+\frac{m}{6}c_1^2+\frac{m}{2}
+\frac{1}{2m}\sqrt{c_1^2}\sqrt{(\eta-\frac{m}{2}c_1)^2}\nonumber\\
&>&\frac{1}{2}\eta c_1+\frac{m}{6}c_1^2+\frac{m}{2}
=\left\{ \begin{array}{ll}
\frac{1}{2}\eta c_1+\;\; \frac{11}{6}\; m
\;\;\; \mbox{for} \;\;\, B={\bf F_g}\\
\frac{1}{2}\eta c_1+\frac{12-k}{6}m
\;\;\; \mbox{for} \;\; B={\bf dP_k}
\end{array} \right.\\
\om_V&\geq &-c_1^2+r+2-\frac{nr(n+r)}{2}\al^2\nonumber\\
&=&\left\{ \begin{array}{ll}
\;\;\;\;\; -8\;\; + \;\; m-1+\frac{3}{2}(m-3)m(g+2)
\;\;\;\;\;\;\;\;\;\;\;\;\;\:
\mbox{for} \;\; B={\bf F_g}\\
-(9-k)+m-1+\frac{3}{2}(m-3)mk(9-k)\;\;\;\;\;\;\;\;\;\;\;\;
\mbox{for} \;\; B={\bf dP_k}
\end{array} \right.
\eeqa

Thus one finds that, as the bound for $\om_{DRY}$ depends on $\eta$ while the
bound for $\om_V$ does not, only for finitely many choices of $\eta$ 
the corresponding bound for $\om_{DRY}$ does not lie above the bound
for $\om_V$ (keeping $m$ fixed). This finishes the proof of part (i) of Theorem \ref{main}.

For part (a) of (ii) note that for $\cN=m$ being $\equiv 2$ (mod 4) one can take $n=r$ (being odd) and use (still with
$\lambda =1/2$) the modified construction ({\ref{extv2}). In this case one gets 
\beqa
\om_V&\geq &-c_1^2+\frac{m}{2}+2-\al^2
\;=\; \left\{ \begin{array}{ll}
 \frac{m}{2}+g-4
\;\;\;\;\;\;\;\;\;\;\;\;
\mbox{for} \;\; B={\bf F_g}\\
\frac{m}{2}+k+1\;\;\;\;\;\;\;\;\;\;\;\;
\mbox{for} \;\; B={\bf dP_k}
\end{array} \right.
\eeqa
where this time for $dP_k$ we choose $\alpha=l-3E_1$ (which is also orthogonal to $c_1$)
with $-\alpha^2=8$.
To finish the proof for $B={\bf F_g}$ one just has to note that
$\frac{11}{6}m>\frac{m}{2}-4+g$. For $B={\bf dP_k}$ note that, using 
$\eta c_1> \frac{m}{2}(9-k)$ because $\eta-nc_1$ is ample, one has
$\omega_{DRY}>[\frac{5}{12}(9-k)+\frac{1}{2}]m$. For $k=1,\dots, 6$ 
the lower bound for $\omega_{DRY}$ is greater than the lower bound 
for $\omega_V$ because $\frac{5}{12}(9-k)m>k+1$. For $k=7,8$ only a small 
number of the possible choices for $m$ is excluded.

For part (b) of (ii) where one takes $n=r$ with $n$ even and $\lambda =1/2$ thus $B={\bf F_0}$
one notes that $\frac{11}{6}m>\frac{1}{2}m-4$.

Acknowledgements. B. A. thanks the SFB 647 for support. G. C. thanks the DFG for support 
in the grant CU 191/1-1 and the FU Berlin for hospitality.

\end{document}